\definecolor{darkgreen}{rgb}{0,0.55,0}
\newcommand{\grad}{\nabla}
\renewcommand{\div}{\grad\cdot}
\newcommand{\curl}{\grad\times}
\newcommand{\N}{\mathbbm{N}}
\newcommand{\R}{\mathbbm{R}}
\DeclareMathOperator{\dist}{dist}
\DeclareMathOperator{\spt}{spt}
\newcommand{\eps}{\varepsilon}
\def\XXint#1#2#3{{\setbox0=\hbox{$#1{#2#3}{\int}$ }
\vcenter{\hbox{$#2#3$ }}\kern-.59\wd0}}
\newtheorem{theorem}{Theorem}
\newtheorem{lemma}{Lemma}
\newtheorem{cor}{Corollary}
\title{Euler vortex dynamics for unbounded vorticities}
\author{Stefano Ceci \and Christian Seis}
\date{Institut f\"ur Analysis und Numerik, Westf\"alische Wilhelms-Universit\"at M\"unster, Germany.}
\begin{document}
\phantom{ }
\vspace{4em}

\begin{flushleft}
{\large \bf On the dynamics of point vortices for the 2D Euler equation with $L^p$ vorticity}\\[2em]
{\normalsize \bf Stefano Ceci and Christian Seis}\\[0.5em]
\small Institut f\"ur Analysis und Numerik,  Westf\"alische Wilhelms-Universit\"at M\"unster, Germany.\\
E-mails: ceci@wwu.de, seis@wwu.de \\[3em]
{\bf Abstract:} 
We study the evolution of  solutions to the 2D Euler equations whose vorticity is sharply concentrated in the Wasserstein sense around a finite number of points. Under the assumption that the vorticity is merely $L^p$ integrable for some $p>2$, we show that the evolving vortex regions remain concentrated around points, and these points are close to solutions to the Helmholtz--Kirchhoff point vortex system.
\end{flushleft}

\vspace{2em}

\section{Introduction}

This article is concerned with the study of the interaction of  vortices in an ideal incompressible fluid contained in the two-dimensional plane. We examine
the evolution of sharply concentrated vortex regions that are driven by the Euler equations and we analyze their convergence to a point vortex system if the vortex regions shrink to a collection of points.

The study of the relation between the PDE description of vortex dynamics given by the Euler equations and the ODE description given by a point vortex system has a long history. 
The earliest research in this direction goes back to Helmholtz \cite{Helmholtz1858}, who in the middle of the 19th century derived a model for the interaction of point vortices from the Euler equations via formal asymptotics. His work was complemented by that of Kirchhoff \cite{Kirchhoff76}, who showed that this point vortex system   is Hamiltonian.
We will refer to this model accordingly as the Helmholtz--Kirchhoff
point vortex system.

On a  rigorous level, a derivation of the point vortex system from the Euler equation was achieved for the first time by Marchioro and Pulvirenti in the 1980ies \cite{MarchioroPulvirenti83} and was improved in many subsequent works, e.g.~\cite{MarchioroPulvirenti93, CapriniMarchioro15, ButtaMarchioro18}. In these works, the authors consider bounded vorticity solutions to the Euler equations whose support is confined in separated regions of small diameter, and prove that the vortex centers are during their evolution close  to the point vortices described by the Helmholtz--Kirchhoff system.

In a recent work \cite{CeciSeis21}, we extended the Marchioro--Pulvirenti method in two ways: First, we derived  the point vortex system from the Euler equations  for vorticity fields that admit a small \emph{unbounded}  part, in the sense that the $L^p$ norm, $p\in(2,\infty)$, of this vorticity part is sufficiently small. Second, we weakened the concentration requirement of the vortex regions. In the present study, we remove the smallness assumption on the unbounded part of the vorticity field.

Considering unbounded vorticities fields is  particularly interesting from an analytical point of view for the reason that, opposed to the situation of bounded vorticities \cite{Yudovich63}, uniqueness of solutions to the two-dimensional Euler equations is presently unknown. On the contrary, there is some evidence of the existence of more than one solution to the associated initial value problem \cite{Vishik18a, Vishik18b, BressanShen20, BressanMurray20}. Our result now shows that even in the case that the 2D Euler equations were ill-posed, in the setting under consideration the Euler equations would inherit a certain degree of uniqueness from the limiting Kirchhoff--Helmholtz point vortex system.
We will discuss this  property later in Section \ref{sec2} in more detail.

As in \cite{CeciSeis21}, we choose to work with a mild notion of vorticity concentration by considering as a measure of concentration the Wasserstein distance  $W_2$ between the vorticity field and the empirical measure associated with the  point vortices. Considering weak measures of concentration is less restrictive than supposing concentration in terms of the support if we envision chaotic or turbulent fluid motions. 
Moreover, since Wasserstein distances metrize weak convergence, see, e.g., Theorem 7.12 in \cite{Villani03}, the main result of this article  can be understood as a bound on the rate of weak convergence of Euler solutions towards the Helmholtz--Kirchhoff point vortex system.
Measures of weak concentration were used earlier in a related study of vortex filaments in three-dimensional inviscid fluids \cite{JerrardSeis17}. The Wasserstein distance that we consider here was already present in the original approach \cite{MarchioroPulvirenti83}; it was, however,   interpreted  as a second moment function (cf.~\eqref{2a} below) and played a role only on the level of intermediate results.

The method of Marchioro and Pulvirenti relies crucially  on certain symmetry properties of the two-dimensional Biot--Savart kernel and it breaks down in   situations in which  these fail to be satisfied. We are aware of two alternative approaches. One is an energy expansion method pionieered by Turkington \cite{Turkington87}, which  applies, for instance, to the axisymmetric Euler equations \cite{BenedettoCagliotiMarchioro00,ButtaMarchioro19}, which describe the evolution of vortex rings, but also to the lake equations \cite{DekeyserVanSchaftingen19}, which represent the motion of fluids whose velocity varies on scales much larger than its depth and which is smaller than the speed of gravitational waves. 
More recently, the so-called gluing method was applied successfully to the context of Euler vortex dynamics by Davila, Del Pino, Musso and Wei \cite{DavilaDelPinoMussoWei18, DavilaDelPinoMussoWei20}. The authors of these papers construct solutions in the planar two-dimensional case and for three-dimensional flows with a helical symmetry.
This approach had previously proved to be effective in other desingularization problems, see for example \cite{delPinoKowalczykWei07} and \cite{delPinoKowalzykWei11} for the Schrödinger and the Allen--Cahn equations, respectively.

Our article is organized as follows.	
In Section \ref{sec2}, we delineate the rigorous mathematical setting and present our main results. Section \ref{sec3} is devoted to the proofs.

\section{Mathematical setting}	\label{sec2}

We consider an incompressible fluid moving in the two-dimensional plane $\R^2$. The motion of the fluid can be described  by either the scalar vorticity field $\omega = \omega(t,x)\in\R$, which measures the tendency to rotate, or the vectorial velocity field $u = u(t,x)\in\R^2$. As we are interested in vortex dynamics, it is convenient to consider the two-dimensional  Euler  equations in vorticity formulation, that is,
\begin{equation}\label{e}
\partial_t \omega + u \cdot \nabla\omega = 0  \quad\text{in }(0,+\infty)\times\R^2.
\end{equation}
This transport equation states that the vorticity is advected along particle trajectories. The incompressibility of the fluid is mathematically encoded in the condition that the velocity is divergence-free,
\begin{equation}
\label{101}
\div u=0.
\end{equation}
Both vorticity and  velocity  depend on each other via the relation $\omega = \curl u = \partial_1 u_2 - \partial_2 u_1$, which can be reversed with the help of the Biot--Savart law
\begin{equation}\label{100}
u(t,x) = K*\omega(t,x) = \int_{\R^2} K(x-y)\,\omega(t,y)\,dy 
\end{equation}
(the symbol of convolution always being meant in space), and allows to recover the velocity from the vorticity. The function $K$ is the so-called Biot-Savart kernel, which  is defined as the rotated gradient of the Newtonian potential $G(z) = -\frac{1}{2\pi}\,\log|z|$, so that
\begin{equation*}
K(z) = \frac{1}{2\pi}\,\frac{z^\perp}{|z|^2}, \quad \text{where } z^\perp = (-z_2,z_1).
\end{equation*}

We equip the vorticity equation \eqref{e} with a compactly supported and possibly unbounded initial datum  $\bar{\omega} \in L^p(\R^2)$ and study the evolution of distributional solutions $\omega$ in the class $C^0((0,\infty);L^p(\R^2))$. In this paper, we have to assume that   $p>2$, which guarantees that the corresponding velocity fields, cf.~\eqref{100}, are (H\"older) continuous. Indeed, if $\omega\in L^p(\R^2)$ is compactly supported, maximal regularity estimates imply that $u$ belongs to $W^{1,p}(\R^2)$, which in turn embeds into a H\"older space. The existence of distributional solutions in our setting is known since the work of DiPerna and Majda \cite{DiPernaMajda87}.
We remark that these solutions are renormalized in the sense of DiPerna and Lions \cite{DiPernaLions89} as noticed, for instance, in \cite{LopesLopesMazzucato06}. This guarantees, in particular, the preservation in time of the Lebesgue norms. Moreover, solutions to the transport equation are Lagrangian, that is, they are advected by the Lagrangian flow associated to the fluid velocity, as in the classical setting. 

We suppose that the initial   vorticity can be decomposed into $N$ separate compactly supported \emph{components} of definite sign,
\begin{equation*}
\bar{\omega} = \sum_{i=1}^{N}\bar{\omega}_i,
\end{equation*}
where either    $\bar{\omega}_i \ge 0$ or $\bar{\omega}_i \le 0$, and
\begin{equation}\label{40}
\min_{i\neq j} \dist( \spt \bar \omega_i , \spt \bar \omega_j ) \ge \delta,
\end{equation}
for some $\delta>0$. Each individual component is now advected by the total fluid flow, and thus, its motion  is described by the linear transport equation
\begin{equation}\label{14}
\begin{cases}
\partial_t \omega_i + u \cdot \nabla\omega_i = 0, \\[.5em]
\omega_i(0,\cdot) = \bar{\omega}_i(\cdot).
\end{cases}
\end{equation}
Thanks to the Sobolev regularity of the velocity field discussed above, solutions to the linear advection equations are renormalized and unique, cf.~\cite{DiPernaLions89}, and thus, the sign of each component $\omega_i$ is preserved over time and 
the solution $\omega$ to \eqref{e} is still described by the sum
\begin{equation*}
\omega = \sum_{i=1}^{N} \omega_i.
\end{equation*}

We notice that, thanks to the incompressibility assumption \eqref{101}, the evolution equation in \eqref{14} is conservative, and thus, the total vorticity of each individual component, the so-called \emph{intensity}, is  constant in time, 
\begin{equation*}
a_i = \int_{\R^2} \bar{\omega}_i(x)\,dx = \int_{\R^2} \omega_i(t,x)\,dx.
\end{equation*}

We suppose that each vortex component is initially weakly concentrated around a single point, in the sense that there exist $N$ distinct points $\bar{Y}_i,\ldots,\bar{Y}_N \in \R^2$  such that
\begin{equation}\label{2}
W_2\left( \frac{\bar{\omega}_i}{a_i},\delta_{\bar{Y}_i} \right) \le \eps
\end{equation}
for any $i=1,\ldots,N$. Here,   $\eps>0$ is the concentration scale which we assume to be small
 and $W_2$ is the $2$-Wasserstein distance, which takes a particularly simple expression if one of the measures is atomic as it is in our setting,
\begin{equation}\label{2a}
	W_2\left( \frac{\bar{\omega}_i}{a_i},\delta_{\bar{Y}_i} \right) = \sqrt{ \frac{1}{a_i} \int_{\R^2} |x - \bar{Y}_i|^2 \,\bar{\omega}_i(x) \,dx }.
\end{equation}
We refer to Villani's monograph \cite{Villani03} for a comprehensive introduction to Wasserstein distances. At this point, we only want to make three observations.

\emph{First}, we remark that
the representation  \eqref{2a}  particularly implies that the weak concentration assumption \eqref{2} holds true whenever the vortex components are strongly concentrated in the sense that the diameter of each component is at most of the size $2\eps$. This observation was already exploited in Marchioro and Puvirenti's original work \cite{MarchioroPulvirenti83}.  In their work and in ours, the second moment function plays a central role in the analysis.

\emph{Second}, an elementary computation shows that the second moment function is minimized at the center of vorticity,
\begin{equation*}
\bar{X}_i = \frac{1}{a_i} \int_{\R^2} x \,\bar{\omega}_i(x)\,dx,
\end{equation*}
and thus, the weak concentration assumption in \eqref{2} entails that
\begin{equation}\label{3}
W_2\left( \frac{\bar{\omega}_i}{a_i},\delta_{\bar{X}_i} \right) \le W_2\left( \frac{\bar{\omega}_i}{a_i},\delta_{\bar{Y}_i} \right)\le \eps.
\end{equation}

\emph{Third}, since Wasserstein distances metrize weak convergence, see Theorem 7.12 in \cite{Villani03}, the concentration assumption \eqref{2} for the initial vorticity implies that the (rescaled) vortex component converges weakly to an atomic measure provided that $\eps\to0$. In particular, if the intensities $a_i$ are chosen independently of $\eps$, for any $p>1$, the quantities $\|\bar \omega\|_{L^p}$ have to diverge in the $\eps\to0$ limit.  In this paper, we suppose that this divergence occurs at most at an algebraic rate, 
\begin{equation}\label{5}
\|\bar{\omega}\|_{L^p} \le \Lambda \eps^{-\gamma},
\end{equation}
where $\gamma$ is some positive number and $\Lambda>0$ is a constant. 
In our earlier work \cite{CeciSeis21}, we had to restrict to exponents $\gamma \le  2(p - 2)/p$, while Caprini and Marchioro considered \eqref{5} with arbitrary $\gamma$ but $p=\infty$ \cite{CapriniMarchioro15}. Our new condition brings together the strengths  of both papers. Note that since $\omega$ and $\omega_i$ satisfy \eqref{e} and \eqref{14}, respectively, in the sense of a renormalized solution \cite{DiPernaLions89}, the scaling assumption on the intial datum \eqref{5} applies to any vortex component during the evolution,
\begin{equation}\label{104}
\|\omega_i(t)\|_{L^p} = \|\bar{\omega}_i\|_{L^p} \le \|\bar{\omega}\|_{L^p}  = \|\omega(t)\|_{L^p} .
\end{equation}

We have to make an additional assumption on the geometry of the vortex components under consideration. In order to rule out the initial vorticity to have long tentacles away from the vortex center, we suppose that there exists a radius $R$ (significantly) smaller than the separation distance $\delta$, and  large enough that the support of each initial component lies in a (open) ball of radius $R$ around its vortex center, 
\begin{equation}\label{4}
\spt\bar\omega_i \subset B_R(\bar{X}_i),
\end{equation}
for any $i\in \{1,\dots,N\}$. Such a radius can always be chosen  independently of $\eps$.

In our first main result, we show that under these concentration and scaling assumptions, the vortex component solutions   are close to the Helmholtz--Kirchhoff point-vortex system, which is given by a collection of points $Y_1,\dots, Y_N\in\R^2$, satisfying the initial value problem,
\begin{equation}\label{pvs}
\begin{cases}
\frac{d}{dt} Y_i(t) = \sum_{j\neq i} a_j K(Y_i(t)-Y_j(t)), \\[.5em]
Y_i(0) = \bar{Y}_i.
\end{cases}\quad \forall i=1,\dots,N.
\end{equation}
To be more specific, our result shows that the vortex components remain concentrated in the Wasserstein sense around the Helmholtz--Kirchhoff point vortices at the scale~$\eps$.

\begin{theorem}\label{th1}
Let $i\in\{1,\dots, N\}$ be given and suppose that  
	\[
	\eps\ll 1\quad\mbox{and}\quad R\ll \delta.
	\]
	 Then there exists a time $T>0$ independent of  $\eps $ and a positive constant $C$  such that
	\begin{equation*}
	W_2 \left( \frac{\omega_i(t)}{a_i}, \delta_{Y_i(t)} \right) \le C e^{Ct} \eps
	\end{equation*}
	for any $t\in[0,T]$. 
\end{theorem}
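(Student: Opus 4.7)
The plan is to work with the second-moment quantity $F_i(t) := \int_{\R^2}|x - Y_i(t)|^2 \omega_i(t,x)\,dx$, which by \eqref{2a} equals $a_i W_2^2(\omega_i(t)/a_i, \delta_{Y_i(t)})$ and satisfies $F_i(0) \le a_i \eps^2$ by \eqref{2}. The aim is to derive a Gr\"onwall-type inequality $\dot F_i(t) \le C\sum_j F_j(t) + C \eps^2$ on a time interval $[0,T]$ independent of $\eps$ on which the point vortices remain $\delta/2$-separated and the bulk of each $\omega_i$ is confined in a ball $B_{R'}(Y_i(t))$ with $R' \ll \delta$; this yields $F_i(t) \le C a_i \eps^2 e^{Ct}$, and hence Theorem~\ref{th1}.

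Differentiating $F_i$ in time, using the transport equation \eqref{14} together with the divergence-free condition \eqref{101} and the ODE \eqref{pvs} for $Y_i$, one obtains
\begin{equation*}
\dot F_i(t) = 2\int_{\R^2} (x - Y_i(t))\cdot\bigl[u(t,x) - \dot Y_i(t)\bigr]\omega_i(t,x)\,dx.
\end{equation*}
Decomposing $u = K*\omega_i + \sum_{j\neq i}K*\omega_j$, the self-interaction contribution vanishes: its $Y_i$-part is zero by the oddness of $K$, and its $x$-part is killed by symmetrizing the double integral in $(x,y)$ to produce the factor $(x-y)\cdot K(x-y)\equiv 0$. Each mutual contribution rewrites, using $a_j = \int\omega_j(y)\,dy$, as
\begin{equation*}
2\iint (x-Y_i)\cdot\bigl[K(x-y) - K(Y_i - Y_j)\bigr]\omega_j(t,y)\,\omega_i(t,x)\,dy\,dx.
\end{equation*}
Since $|Y_i(t) - Y_j(t)| \ge \delta/2$ throughout $[0,T]$, the kernel $K$ is smooth on the ``good'' set $\{|x-Y_i|,\,|y-Y_j|\ll\delta\}$, and a first-order Taylor expansion of $K$ around $Y_i - Y_j$, combined with Cauchy--Schwarz, bounds the good part of the integral by $C(F_i + F_j)$.

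The main obstacle, and the essential novelty over \cite{CeciSeis21}, is the control of the ``bad'' region where $\omega_i$ or $\omega_j$ carry mass far from their respective point vortex---a scenario that the weak concentration hypothesis \eqref{2} does not rule out a priori. The strategy is to establish a containment estimate showing that the mass $m_i(t) := \int_{|x - Y_i(t)| > R'}\omega_i(t,x)\,dx$ remains algebraically small in $\eps$ on $[0,T]$, in the spirit of the excess-of-mass lemmas of \cite{MarchioroPulvirenti83, CeciSeis21}. This is coupled with an $L^\infty$ interpolation bound on the self-induced velocity of the form $\|K*\omega_i\|_{L^\infty} \lesssim \|\omega_i\|_{L^1}^\alpha\|\omega_i\|_{L^p}^{1-\alpha}$ with $\alpha=(p-2)/(2(p-1))$, so that the smallness of $m_i(t)$ outweighs the $\eps^{-\gamma}$ growth \eqref{5} of the $L^p$ norm. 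This is precisely where \cite{CeciSeis21} required $\gamma \le 2(p-2)/p$; removing that restriction here calls for a sharper iterative refinement of the containment argument, which progressively improves as partial control on $F_i$ from the good region becomes available. Once the bad contributions are absorbed into the $O(\eps^2)$ error, Gr\"onwall's inequality closes, and $T$ is fixed by a standard bootstrap ensuring that the separation and containment conditions persist.
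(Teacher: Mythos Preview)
Your outline is broadly correct but misplaces the difficulty, and as a result is more entangled than it needs to be.

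The paper defines $T$ as the first time at which the \emph{supports} $\Omega_i(t)=\spt\omega_i(t)$ come within distance $\delta/2$ of one another. With this definition, for every $x\in\Omega_i(t)$ and $y\in\Omega_j(t)$ one has $|x-y|\ge\delta/2$, and together with $|Y_i-Y_j|\ge\delta/2$ this yields the global Lipschitz bound $|K(x-y)-K(Y_i-Y_j)|\lesssim |x-Y_i|+|y-Y_j|$ on the \emph{entire} domain of integration. Your good/bad splitting is then unnecessary: the Gr\"onwall inequality for $F_i$ closes immediately, with no bad region to absorb. The paper makes this even cleaner by first proving concentration around the vorticity centre $X_i$ (Lemma~\ref{lem3}), using only that the far field $\sum_{j\ne i}u_j$ is Lipschitz on $\Omega_i$; the transfer from $X_i$ to $Y_i$ is a separate Gr\"onwall (Theorem~\ref{th2}), and Theorem~\ref{th1} follows by the triangle inequality.

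The iterative excess-mass argument and the $L^\infty$ velocity bound you describe \emph{are} the heart of the matter, but they serve a different purpose: they are used exclusively to prove that the support-separation time $T$ defined above satisfies $T\gtrsim 1$ independently of $\eps$ (Lemmas~\ref{lem4}--\ref{lem8}). This is where the restriction $\gamma\le 2(p-2)/p$ of \cite{CeciSeis21} is removed. Your proposal folds this step into the Gr\"onwall estimate itself, which creates an apparent circularity (``progressively improves as partial control on $F_i$\ldots becomes available'') that the paper avoids by keeping the two arguments completely decoupled. Once you adopt the support-based definition of $T$, the $W_2$ estimate is elementary on $[0,T]$, and all the work goes into the lower bound on $T$.
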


In the hypothesis of the theorem, we assume that the size of the vortex components is initially small compared to the distance of the components. This assumption is necessary in order to guarantee that the vortex components remain well-separated on order-one time scales. 
In particular, both geometric scales $R$ and $\delta$ are supposed to be of order one in $\eps$, which has to be sufficiently small (dependent on $\gamma$ and $p$).

For initial data that are sharply concentrated in terms of the support of the vortex components, a variation  of the analysis in \cite{CapriniMarchioro15} (along the lines of Lemma \ref{lem4} below) would also show that the vorticity remains strongly concentrated around the point vortex system. As in \cite{CapriniMarchioro15}, however, the radius of concentration would increase from $\eps$ up to $\eps^{\frac12-}$. We will not pursue this direction here but focus on a rather weak notion of vorticity concentration.


The  result from Theorem \ref{th1} translates into an estimate on the distance between the full solution $\omega $ and the singular vorticity field $\sum_i a_i \delta_{Y_i}$ if the $2$-Wasserstein distance is traded for the $1$-Wasserstein distance. The reason for choosing $W_1$ over $W_2$ is that the former can be extended to configurations that are not necessarily nonnegative. Indeed, thanks to the dual Kantorovich--Rubinstein representation 
\begin{equation*}
	W_1 (f, g) = \sup\left\{ \int_{\R^2} (f - g) \zeta \,dx : \; \|\nabla\zeta\|_{L^\infty} \le 1 \right\},
\end{equation*}
see Theorem 1.14 in \cite{Villani03}, the $1$-Wasserstein distance is well-defined if $f$ ang $g$ are integrable functions of the same global average.

 Hence, we can obtain the following corollary.

 \begin{cor}\label{cor1}
	Under the same assumptions of Theorem \ref{th1},   there exists a time $T>0$ independent of  $\eps $ and a positive constant $C$  such that
	\begin{equation*}
	W_1\left( \omega(t),\sum_i a_i \delta_{Y_i(t)} \right) \le  C e^{Ct} \eps,
	\end{equation*}
	for any $t\in[0,T]$.
\end{cor}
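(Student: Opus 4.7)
The plan is to deduce the corollary from Theorem \ref{th1} by decomposing both $\omega$ and the target empirical measure into their $N$ vortex components and estimating each component separately. Because the $1$-Wasserstein distance is well-defined on pairs of signed (or just integrable) measures with equal total mass, and because $\int \omega_i(t,\cdot)\,dx = a_i = \int a_i\delta_{Y_i}$ for every $i$, all the Wasserstein pairings below make sense.

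First I would invoke the Kantorovich--Rubinstein duality cited in the excerpt. For any $1$-Lipschitz $\zeta$,
\begin{equation*}
\int_{\R^2} \Bigl(\omega(t) - \sum_i a_i\delta_{Y_i(t)}\Bigr)\zeta\,dx \;=\; \sum_{i=1}^N \int_{\R^2}\bigl(\omega_i(t) - a_i\delta_{Y_i(t)}\bigr)\zeta\,dx.
\end{equation*}
Since each $\omega_i$ has definite sign, $|\omega_i|/|a_i|$ is a probability measure. Factoring out the sign and $|a_i|$, each summand is bounded by $|a_i|\,W_1(|\omega_i(t)|/|a_i|,\delta_{Y_i(t)})$, and since $\omega_i/a_i = |\omega_i|/|a_i|$ this equals $|a_i|\,W_1(\omega_i(t)/a_i,\delta_{Y_i(t)})$. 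Taking the supremum over $\zeta$ and using subadditivity yields
\begin{equation*}
W_1\Bigl(\omega(t),\sum_i a_i\delta_{Y_i(t)}\Bigr) \;\le\; \sum_{i=1}^N |a_i|\,W_1\!\left(\frac{\omega_i(t)}{a_i},\delta_{Y_i(t)}\right).
\end{equation*}

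Second, I would upgrade from $W_1$ to $W_2$ on the right-hand side using Jensen's (or Cauchy--Schwarz's) inequality applied to an optimal transport plan $\pi$ between the probability measures $\omega_i(t)/a_i$ and $\delta_{Y_i(t)}$:
\begin{equation*}
W_1\!\left(\frac{\omega_i(t)}{a_i},\delta_{Y_i(t)}\right) = \int |x-y|\,d\pi(x,y) \;\le\; \left(\int |x-y|^2\,d\pi(x,y)\right)^{1/2} = W_2\!\left(\frac{\omega_i(t)}{a_i},\delta_{Y_i(t)}\right).
\end{equation*}
(In fact, since $\delta_{Y_i(t)}$ is atomic, the transport plan is unique and the computation reduces to comparing first and second moments.)

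Combining the two inequalities with the conclusion of Theorem \ref{th1} gives
\begin{equation*}
W_1\Bigl(\omega(t),\sum_i a_i\delta_{Y_i(t)}\Bigr) \;\le\; \sum_{i=1}^N |a_i|\,C e^{Ct}\eps \;\le\; C'e^{Ct}\eps
\end{equation*}
on $[0,T]$, where $C'$ absorbs the finite factor $\sum_i |a_i|$ (which depends only on the initial data, not on $\eps$). There is no genuine obstacle here; the only technical point to mention carefully is the equal-mass condition that makes $W_1$ well-defined for the (possibly signed) differences, which is automatically satisfied thanks to the conservation of the intensities $a_i$ along the renormalized flow.
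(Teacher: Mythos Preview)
Your proof is correct and follows essentially the same approach as the paper: decompose into the $N$ components via the Kantorovich--Rubinstein duality (the paper calls this the triangle inequality), pass from $W_1$ to $W_2$ by Jensen, and invoke Theorem~\ref{th1}. The only cosmetic difference is that you spell out the equal-mass and definite-sign considerations a bit more explicitly than the paper does.
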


In particular, since Wasserstein distances metrize weak convergence in the sense of measures,   the Euler vorticity distribution $\omega(t)$ converges weakly to the point vortex measure $\sum_i a_i \delta_{Y_i(t)}$. Our result provides in addition an estimate on the convergence rate, which, up to an exponenentially in time diverging prefactor, remains of the order of $\eps$ as for the initial datum, see \eqref{2}. Furthermore, the estimate holds true for \emph{any} solution $\omega$ satisfying our initial assumptions. As we have written earlier, this is a remarkable property in light of the fact that it is currently not known whether solutions to the Euler equations with vorticity in  $L^p$  are unique, if $p<\infty$. As solutions to the point vortex system \eqref{pvs} are unique, our result shows that in any case uniqueness is in some sense recovered in the limit as $\eps \to 0$. 

We remark that the estimate in  Corollary \ref{cor1} can be interpreted as a stability estimate for the Euler equation.  Indeed, as  observed by Schochet \cite{Schochet96}, the point vortex system can be considered as a very weak solution to the Euler equation (in the sense of Delort \cite{Delort91}). Adopting this point of view, our result provides  a stability estimate between a distributional solution and an even  weaker solution, which, however, carries more  structure.
Notice that general stability estimates for the Euler equation are currently still missing. The works closest to this direction include that by Loeper \cite{Loeper06}, who also works with Wasserstein distances, see also \cite{Seis21}. In the context of linear transport equations with general Sobolev vector fields, stability estimates were obtained only recently \cite{Seis17, Seis18}.

We finally translate the estimates on the vortex components into an estimate of the vorticity centers 
\begin{equation*}
X_i(t) = \frac{1}{a_i} \int_{\R^2} x\,\omega_i(t,x)\,dx
\end{equation*}
and their velocities.

\begin{theorem}\label{th2}
	Under the same assumptions of Theorem \ref{th1},   there exists a time $T>0$ independent of  $\eps $ and a positive constant $C$  such that
	\begin{equation*}
	|X_i(t)-Y_i(t)| \le C e^{Ct} \eps, \quad \text{and} \quad \left| \frac{d}{dt}X_i(t) - \frac{d}{dt}Y_i(t) \right| \le C e^{Ct} \eps,
	\end{equation*}
	for any $t\in[0,T]$.
\end{theorem}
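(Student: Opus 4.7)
The position estimate follows immediately from Theorem~\ref{th1}. Since $\omega_i(t,\cdot)/a_i$ is a probability measure, I write
\[
X_i(t)-Y_i(t) = \int_{\R^2}\bigl(x-Y_i(t)\bigr)\,\frac{\omega_i(t,x)}{a_i}\,dx,
\]
and Cauchy--Schwarz against this probability measure, combined with the representation \eqref{2a}, gives
\[
|X_i(t)-Y_i(t)| \le W_2\!\left(\frac{\omega_i(t)}{a_i},\delta_{Y_i(t)}\right) \le Ce^{Ct}\eps.
\]

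For the velocity estimate, I first compute $\tfrac{d}{dt}X_i$ by testing the conservative form $\partial_t\omega_i+\div(u\omega_i)=0$ of \eqref{14} (available since $\div u=0$) against the coordinate function, yielding $\tfrac{d}{dt}X_i(t)=\tfrac{1}{a_i}\int_{\R^2} u(t,x)\,\omega_i(t,x)\,dx$. Decomposing $u=\sum_j K*\omega_j$ and exploiting the antisymmetry $K(-z)=-K(z)$ (which, by Fubini, makes the self-interaction $j=i$ vanish) gives
\[
\frac{d}{dt}X_i(t)=\sum_{j\neq i}\frac{1}{a_i}\int\int K(x-y)\,\omega_i(t,x)\omega_j(t,y)\,dx\,dy.
\]
Rewriting $a_jK(Y_i-Y_j)=\tfrac{1}{a_i}\int\int K(Y_i-Y_j)\omega_i(t,x)\omega_j(t,y)\,dx\,dy$ and subtracting \eqref{pvs} yields
\[
\frac{d}{dt}X_i(t)-\frac{d}{dt}Y_i(t)=\sum_{j\neq i}\frac{1}{a_i}\int\int\bigl[K(x-y)-K(Y_i-Y_j)\bigr]\omega_i(t,x)\omega_j(t,y)\,dx\,dy,
\]
so the task reduces to bounding each summand by $Ce^{Ct}\eps$.

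I would handle each such double integral by splitting the domain into the near region $G_{ij}=\{|x-Y_i|\le r,\ |y-Y_j|\le r\}$, with $r$ a small fraction of $\delta$, and its complement $G_{ij}^c$. On $G_{ij}$ one has $|x-y|\ge\delta/2$, so $K$ is Lipschitz there with constant $\lesssim\delta^{-2}$, giving
\[
|K(x-y)-K(Y_i-Y_j)|\le\frac{C}{\delta^2}\bigl(|x-Y_i|+|y-Y_j|\bigr).
\]
Combining this with Cauchy--Schwarz and the first-moment estimate $\int|x-Y_i|\omega_i\,dx\le a_i W_2(\omega_i(t)/a_i,\delta_{Y_i(t)})\le Ca_ie^{Ct}\eps$, which is itself a direct consequence of Theorem~\ref{th1}, produces a contribution of order $e^{Ct}\eps$, acceptable after summation over the finitely many $j\neq i$.

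The principal obstacle lies in the complementary region $G_{ij}^c$, where the Biot--Savart kernel may become singular at $x=y$. Chebyshev's inequality applied to the $W_2$ bound of Theorem~\ref{th1} confines the $\omega_i$-mass in $\{|x-Y_i|>r\}$ to order $\eps^2$, but controlling the singular integrand against such a small mass is delicate: the naive application of a Young-type bound of the form $\|K*\omega_j\|_{L^\infty}\lesssim\|\omega_j\|_{L^1}^{(p-2)/(2p-2)}\|\omega_j\|_{L^p}^{p/(2p-2)}$ together with the scaling \eqref{5} costs a factor $\eps^{-\gamma p/(2p-2)}$ and therefore closes the estimate only for small enough $\gamma$. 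Since removing this restriction on $\gamma$ is precisely the improvement of the present paper over \cite{CeciSeis21}, I expect the full argument to rely on the same finer bulk-concentration machinery that must underlie the proof of Theorem~\ref{th1} itself---presumably an iterated near/far splitting on several dyadic scales in $r$---so that the tails against which \eqref{5} is ultimately applied carry only sub-polynomial mass, rendering the singular-kernel contribution $o(\eps)$ as required.
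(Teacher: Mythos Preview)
Your position estimate is fine in spirit, but note that in the paper Theorem~\ref{th1} is deduced \emph{from} Theorem~\ref{th2} (via the triangle inequality combined with Lemma~\ref{lem3}), so invoking Theorem~\ref{th1} here is circular. The paper instead uses Lemma~\ref{lem3}, which concentrates $\omega_i$ around $X_i$ rather than $Y_i$, and then runs a Gronwall argument on $\sum_i|X_i-Y_i|$ directly; the position bound then feeds back into the velocity bound.

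The more substantive issue is that your ``principal obstacle'' does not exist. In the double integral
\[
\frac{1}{a_i}\iint \bigl[K(x-y)-K(Y_i-Y_j)\bigr]\,\omega_i(t,x)\,\omega_j(t,y)\,dx\,dy,
\]
the variable $x$ ranges over $\Omega_i(t)=\spt\omega_i(t)$ and $y$ over $\Omega_j(t)=\spt\omega_j(t)$ with $i\neq j$. By the very definition \eqref{t1} of $T$, these supports satisfy $\dist(\Omega_i(t),\Omega_j(t))\ge\delta/2$ for all $t\le T$, so $|x-y|\ge\delta/2$ everywhere on the domain of integration. The Biot--Savart kernel is therefore uniformly Lipschitz on the relevant set with constant $\lesssim\delta^{-2}$, and there is no singular region to handle. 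Your near/far splitting and the anticipated dyadic iteration are unnecessary; the Lipschitz bound applies globally, yielding
\[
\left|\frac{dX_i}{dt}-\frac{dY_i}{dt}\right|\lesssim\sum_j\int|x-X_j|\,|\omega_j|\,dx+\sum_j|X_j-Y_j|,
\]
after which Lemma~\ref{lem3} and Gronwall close the argument with no appeal to \eqref{5} whatsoever. The heavier machinery you anticipate (the iteration of Lemmas~\ref{lem7}--\ref{lem8}) is used solely to establish $T\gtrsim 1$, i.e., that the supports remain separated; once that is granted, the proof of Theorem~\ref{th2} is elementary.
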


The result thus shows that the centers of vorticity and their velocities remain $\eps$-close to the point vortex solutions.

\section{Proofs} \label{sec3}

The strategy of our proof is heavily inspired by   Marchioro and Pulvirenti's original work \cite{MarchioroPulvirenti83}, Carprini and Marchioro's improvement \cite{CapriniMarchioro15} and builds up on our own earlier contribution   \cite{CeciSeis21}. 

We start by introducing some notation.

In the following, we write $A \lesssim B$ if an inequality  $A\le CB$ holds for some  constant $C>0$ that  is independent of $t$ and $\eps$.  We furthermore write $A \sim B$ if both $A \lesssim B$ and $B \lesssim A$ hold. This way, for instance, the vortex intensities $a_i$ will be neglected in our analysis. 
 In situations in which we choose to explicitely introduce a  constant $C$, this constant will always be independent of $t$ and $\eps$, and its value may change from line to line.

We shall also introduce  $\Omega_i(t) $ as the support of the $i$th vortex at time $t$, that is, $\Omega_i(t) = \spt \omega_i(t)$. With this notation, for any $i\not=j$, it holds that $\dist(\Omega_i(0),\Omega_j(0)) \ge \delta$, see \eqref{40}. In our main results, we claim that there exists a time $T$ that is independent of $\eps$ up to which solutions to the Euler equations are close to those of the point vortex system. Here, we make this choice more precise. In fact, we choose $T$ as the time at which the distance between the vortex components has decreased by a factor of two,
\begin{equation}\label{t1}
T= \sup\left\{ t\in(0,+\infty):\; \dist(\Omega_i(s),\Omega_j(s)) \ge \delta/2, \;\forall i\neq j, \;s\le t \right\}.
\end{equation}
Because the fluid velocity $u$   is continuous, it is clear that the vortex components cannot collide instantaneously, thus, $T>0$.  We will see later that this time is bounded from below uniformly in $\eps$, that is, $T\gtrsim1$.

Without loss of generality, we may restrict our attention to time intervals such that 
\begin{equation}\label{49}
\min_{i\neq j} |Y_i(t)-Y_j(t)| \ge \frac{\delta}{2} , 
\end{equation}	
for any $t\le T$. Indeed, if this is not the case, we select a minimal time   $\tilde T < T$ at which \eqref{49} holds with an equality. Since the point vortex system $\{Y_1,\dots,Y_N\}$ is independent of $\eps$, the time $\tilde T$ must also be independent of $\eps$, and we can work on the time interval $[0,\tilde T]$.  

We finally define by $u_i = K\ast \omega_i$ the velocity that is induced by the $i$th vortex component. For every fixed $i$, we may then decompose the total velocity  into the field induced by the $i$th vortex component and that induced by the other components. That is, we write
\begin{equation}\label{102}
	u = \sum_{j=1}^{N} K*\omega_j = u_i + F_i,
\end{equation}
where  $F_i = \sum_{j\neq i} u_j$ is the far field.

Our first step in  the proof is essentially identical to that in \cite{MarchioroPulvirenti83}. We show that until time $T$, the vorticity remains sharply concentrated in the Wasserstein sense. From this statement, our main estimates on the relation between the Euler equations and the Helmholtz--Kirchhoff system can be directly deduced. This is the content of the following Subsection \ref{SS1}. In the final Subsection \ref{SS2}, we will show that $T$ is indeed bounded from below uniformly in $\eps$, which complements the proof of our results.

\subsection{Concentration of vorticity and proofs of the main results under the assumption $T\gtrsim 1$.}\label{SS1}

The concentration estimate relies on elementary bounds on the far field.

\begin{lemma}\label{lem1}
	Let $i\in\{1,\ldots,N\}$ be given. Then for any $j\neq i$ it holds that
	\begin{equation*}
	\|u_j(t)\|_{L^{\infty}(\Omega_i(t))}\lesssim 1 \quad \mbox{and}\quad \|\grad u_j(t)\|_{L^{\infty}(\Omega_i(t))}\lesssim 1 ,
	\end{equation*}
	for any  $t\in[0,T]$.
	\end{lemma}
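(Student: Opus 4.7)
The plan is to exploit the definition of $T$, which guarantees that $\dist(\Omega_i(t),\Omega_j(t))\ge \delta/2$ for all $t\in[0,T]$ and all $i\ne j$. Since $u_j(t,x)=\int_{\R^2}K(x-y)\omega_j(t,y)\,dy$ and $\omega_j(t)$ is supported in $\Omega_j(t)$, whenever $x\in\Omega_i(t)$ the integrand is evaluated only at separation $|x-y|\ge \delta/2$, so we never see the singularity of $K$ or $\nabla K$.

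First, I would quantify this. Using $K(z)=\frac{1}{2\pi}\frac{z^\perp}{|z|^2}$, we have $|K(z)|=\frac{1}{2\pi|z|}$ and $|\nabla K(z)|\lesssim |z|^{-2}$ pointwise. Thus for $x\in\Omega_i(t)$,
\begin{equation*}
|u_j(t,x)|\le \int_{\Omega_j(t)} |K(x-y)|\,|\omega_j(t,y)|\,dy \le \frac{1}{\pi\delta}\|\omega_j(t)\|_{L^1},
\end{equation*}
and similarly $|\nabla u_j(t,x)|\lesssim \delta^{-2}\|\omega_j(t)\|_{L^1}$. Since $R\ll \delta$ is of order one, this reduces the claim to a uniform-in-time bound on $\|\omega_j(t)\|_{L^1}$.

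Second, I would observe that this $L^1$ bound is free: the component $\omega_j$ is advected by the divergence-free velocity $u$, and the authors already pointed out that the sign of each $\omega_j$ is preserved for all time. Therefore
\begin{equation*}
\|\omega_j(t)\|_{L^1} = \left|\int_{\R^2}\omega_j(t,y)\,dy\right| = |a_j|,
\end{equation*}
which is a fixed constant, independent of $t$ and $\eps$. Combining with the previous display and using the $\lesssim$ convention (which absorbs the intensities $a_j$ and the geometric factor $\delta^{-1}$), both statements of the lemma follow.

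The proof is essentially routine once the definition of $T$ is used, and there is no genuine obstacle. The only point that requires a moment of care is making sure $\delta$ (not $\delta/2$) drives the estimate cleanly and that we really are entitled to the conservation $\|\omega_j(t)\|_{L^1}=|a_j|$; both follow directly from what is already established in Section~\ref{sec2} (renormalization, preservation of sign, and the defining inequality in \eqref{t1}). The same argument will later give the slightly refined statement needed to propagate concentration, but for the present lemma it suffices as stated.
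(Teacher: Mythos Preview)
Your proof is correct and follows essentially the same route as the paper: exploit the separation $\dist(\Omega_i(t),\Omega_j(t))\ge \delta/2$ from the definition of $T$, use the explicit decay $|K(z)|\lesssim |z|^{-1}$ and $|\nabla K(z)|\lesssim |z|^{-2}$, and close with the conserved $L^1$ mass $\|\omega_j(t)\|_{L^1}=|a_j|$. The only cosmetic difference is that the paper phrases the gradient bound as a Lipschitz estimate for $u_j$ on $\Omega_i(t)$ rather than bounding $\nabla u_j$ pointwise, but these are equivalent here.
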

\begin{proof}The result is an immediate consequence of the scaling properties of the Biot--Savart kernel and the separation of the vortex components assumed up to time $T$, see \eqref{t1}. For instance, if $x\in\Omega_i(t)$ and $z\in\Omega_j(t)$,  it holds  that $|x-z| \gtrsim 1 $ and thus
	\begin{equation*}
		|u_j(t,x)| \lesssim \int \frac{1}{|x-z|}\,|\omega_j(t,z)|\,dz \lesssim 1.
	\end{equation*}
	
	Similarly, since  $K$ is Lipschitz on $B_{\delta/2}(0)^c$ with a constant of the order of $1/\delta^2 \lesssim 1$, for any $x,y\in\Omega_i(t)$, we find that
	\begin{equation*}
		|u_j(t,x)-u_j(t,y)| \le \int |K(x-z)-K(y-z)|\,|\omega_j(t,z)|\,dz \lesssim  |x-y|,
	\end{equation*}
	which is what we claimed.
\end{proof}

 Lemma \ref{lem1} establishes the Lipschitz property of the far-field, which is already sufficient to show that the vortex components remain concentrated near their vorticity centers with respect to the $2$-Wasserstein distance.

\begin{lemma}\label{lem3}
	Let $i\in\{1,\ldots,N\}$  be given. There exists a universal constant $C<\infty$   such that
	\begin{equation*}
	W_2\left(\frac{\omega_i(t)}{a_i},\,\delta_{X_i(t)}\right)\le e^{C\,t}\,\varepsilon,
	\end{equation*}
for any  $t\in[0,T]$.
\end{lemma}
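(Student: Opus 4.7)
The plan is to bound the squared $2$-Wasserstein distance $M_i(t) := \frac{1}{a_i}\int|x-X_i(t)|^2\omega_i(t,x)\,dx$ by a Grönwall argument, starting from $M_i(0)\le\eps^2$ (which follows from \eqref{3} and the explicit formula \eqref{2a}). The conclusion will then follow by taking square roots and adjusting $C$.

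The main computation is to differentiate $M_i$ in time using the transport equation \eqref{14}. The contribution from the motion of the center $X_i(t)$, namely
\[
-\frac{2}{a_i}\dot X_i\cdot\int(x-X_i(t))\,\omega_i(t,x)\,dx,
\]
vanishes identically by the very definition of the center of vorticity. For the remaining term, integration by parts together with $\div u=0$ yields
\[
\frac{d}{dt}M_i(t)=\frac{2}{a_i}\int(x-X_i(t))\cdot u(t,x)\,\omega_i(t,x)\,dx.
\]
Decomposing $u=u_i+F_i$ as in \eqref{102}, the self-interaction term can be symmetrised in $x$ and $y$ using the antisymmetry $K(x-y)=-K(y-x)$ of the Biot--Savart kernel; after symmetrisation the integrand contains the factor $(x-y)\cdot K(x-y)$, which is zero since $K$ is perpendicular to its argument. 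Hence the self-interaction drops out, exactly as in \cite{MarchioroPulvirenti83}.

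We are left with the far-field contribution
\[
\frac{2}{a_i}\int(x-X_i(t))\cdot F_i(t,x)\,\omega_i(t,x)\,dx.
\]
Subtracting the constant $F_i(t,X_i(t))$ kills the integral (again by the defining property of $X_i$), so we may replace $F_i(x)$ by $F_i(x)-F_i(X_i(t))$. Lemma \ref{lem1} provides a uniform Lipschitz bound $\|\nabla F_i\|_{L^\infty(\Omega_i(t))}\lesssim 1$ for $t\le T$, which, after an elementary extension argument to handle the possible fact that $X_i(t)\notin\Omega_i(t)$, gives $|F_i(x)-F_i(X_i(t))|\lesssim|x-X_i(t)|$ for $x\in\Omega_i(t)$. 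Substituting and taking absolute values (keeping track of the definite sign of $\omega_i$ so that $\omega_i/a_i\ge0$) produces
\[
\Bigl|\frac{d}{dt}M_i(t)\Bigr|\lesssim \frac{1}{a_i}\int|x-X_i(t)|^2\,\omega_i(t,x)\,dx=M_i(t).
\]
Grönwall's inequality then gives $M_i(t)\le M_i(0)e^{Ct}\le\eps^2e^{Ct}$ on $[0,T]$, and taking square roots concludes the proof.

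The step I expect to require the most care is justifying the use of the Lipschitz estimate of Lemma \ref{lem1} at the point $X_i(t)$ itself, since the center of vorticity need not lie in $\Omega_i(t)$; this is minor, however, because $F_i=\sum_{j\ne i}K*\omega_j$ is in fact Lipschitz on the whole complement of $\bigcup_{j\ne i}\Omega_j(t)$ with the same constant, thanks to the separation \eqref{t1} that is enforced up to time $T$. The genuinely nontrivial ingredient is the cancellation of the self-interaction, but this is purely algebraic given the structure of the Biot--Savart kernel.
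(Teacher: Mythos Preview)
Your argument is correct and follows essentially the same route as the paper: differentiate the second moment, kill the self-interaction via the antisymmetry and orthogonality of the Biot--Savart kernel, subtract a constant from $F_i$ using the definition of $X_i$, invoke the Lipschitz bound of Lemma~\ref{lem1}, and close by Gr\"onwall. You are in fact slightly more careful than the paper in flagging the issue of evaluating $F_i$ at $X_i(t)$; your fix is fine in spirit, though note that $F_i$ is uniformly Lipschitz only on sets bounded away from $\bigcup_{j\ne i}\Omega_j$ (not on the full complement), so one either restricts to a $\delta/4$-separated region or, more cleanly, subtracts the $\omega_i$-average of $F_i$ instead of $F_i(X_i)$.
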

This lemma is the key estimate in the Marchioro--Pulvirenti method. We recall the simple proof for the convenience of the reader. 

\begin{proof} For notational convenience, we suppose that $a_i=1$ and we set $W_i(t) = W_2(\omega_i(t),\delta_{X_i(t)})$. Using  the definition of the vortex centers $X_i$ and the splitting of the velocity field in \eqref{102}, we compute the rate of change of $W_i$,
	\begin{align*}
		\frac12\frac{d}{dt} W_i^2 &=  \int (x-X_i) \cdot u(x) \,dx 
		\\
		&=  \int (x-X_i) \cdot (F_i(x)-F_i(X_i)) \omega_i(x) \,dx +  \int (x-X_i) \cdot u_i(x)\,\omega_i(x)\,dx .
		\end{align*}
The second term on the right-hand side vanishes by the symmetry properties of the Biot--Savart kernel $K$. Using the Lipschitz bound for the far field from Lemma \ref{lem1}, we thus find the differential inequality
\[
\frac{d}{dt} W_i^2 \lesssim   W_i^2,
\]
which can be solved via a Gronwall argument, and uses the concentration of the inital datum in \eqref{3}. 
\end{proof}

We will now show that the concentration estimate implies all the estimates of our main results. We start with the proof of Theorem \ref{th2}, that is, the bound on the distance between vortex centers and point vortices and on the difference of the corresponding velocities.

\begin{proof}[Proof of Theorem \ref{th2} supposing that $T\gtrsim 1$.]
 We compute how the distance between the point vortices and the vortex centers changes under the evolutions \eqref{14} and \eqref{pvs}. We start by noticing that 
 \begin{equation}
 \label{107}
 \frac{d}{dt} X_i = \frac1{a_i}\int u\omega_i\, dx = \frac1{a_i}\sum_{j\not=i} \iint K(x-y)\omega_i(x)\omega_j(y)\, dxdy,
 \end{equation}
 thanks to the symmetry properties of the Biot--Savart kernel, and thus
	\begin{equation*}
	\begin{split}
	\left| \frac{d}{dt}|X_i - Y_i| \right| &\le \left| \frac{dX_i}{dt} - \frac{dY_i}{dt} \right| \\
	& \le \sum_{j\neq i} \left| \frac{1}{a_i} \iint K(x-y)\,\omega_i(x)\,\omega_j(y)\,dy\,dx - a_j K(Y_i - Y_j) \right| \\
	& \le \frac{1}{|a_i|} \sum_{j\neq i} \iint |K(x-y) - K(Y_i-Y_j)| \, |\omega_i(x)| \, |\omega_j(y)|\,dx\,dy, 
	\end{split}
	\end{equation*}
	as a consequence of the triangle inequality.
	Now, since both  $|x-y| \gtrsim 1$ and $|Y_i - Y_j| \gtrsim1$ in the integrand, we may use the Lipschitz property of the Biot--Savart kernel away from the origin and the triangle inequality to deduce that
	\begin{equation*}
		\begin{split}
		\left| \frac{d}{dt}|X_i - Y_i| \right| &\lesssim \int |x - Y_i| \,|\omega_i(x)|\,dx + \sum_{j\neq i} \int |y - Y_j| \,|\omega_j(y)|\,dy \\
		& \lesssim \sum_{j} \int_{\R^2} |x - X_j| \,|\omega_j(x)|\,dx + \sum_{j}|X_j - Y_j| .
		\end{split}
	\end{equation*}
We use Jensen's inequality and Lemma \ref{lem3} to bound the first term on the right-hand side. Summing over $i$, we then find the differential inequality
\[
\left| \frac{d}{dt}\sum_i |X_i - Y_i| \right|\lesssim e^{Ct}\eps + \sum_{j}|X_j - Y_j| ,
\]
and applying a standard Gronwall argument,  we deduce both statements of the theorem.	
\end{proof}

From what we have established so far, Theorem \ref{th1} is an immediate consequence.

\begin{proof}[Proof of Theorem \ref{th1} supposing that $T\gtrsim 1$.]
	The statement of Theorem \ref{th1} follows from Lemma \ref{lem3} and Theorem \ref{th2} via the triangle inequality.
\end{proof}

It remains to provide the proof of Corollary \ref{cor1}.

\begin{proof}[Proof of Corollary \ref{cor1} supposing that $T\gtrsim 1$.]

	Keeping in mind that $\omega = \sum_i \omega_i$, we have by the triangle and Jensen's inequalities that
	\begin{equation*}
		\begin{split}
		W_1 \left( \omega(t), \sum_i a_i\,\delta_{Y_i} \right) &\le \sum_i W_1 \left( \omega_i(t), a_i \delta_{Y_i(t)} \right) \\
		& \le \sum_i |a_i| W_1 \left( \frac{\omega_i(t)}{a_i}, \delta_{Y_i(t)} \right) \le \sum_i |a_i| W_2 \left( \frac{\omega_i(t)}{a_i}, \delta_{Y_i(t)} \right),
		\end{split}
	\end{equation*}
and thus, the statement  follows from Theorem \ref{th1}.
\end{proof}

\subsection{Proof of $T\gtrsim 1$.} \label{SS2}
Our goal in this subsection is to show that $T$ is bounded from below, uniformly in~$\eps$.

Before proceeding, we have to introduce some further notation. 
For any $\rho>0$ and $t\in [0,T]$, we define by
\begin{equation*}
m_i(t,\rho) = \frac{1}{a_i}\int_{B_{\rho}(X_i(t))^c} \omega_i(t,x) \,dx
\end{equation*}
the vorticity portion of the  $i$th component at time $t$ lying at least at distance $\rho$ from its center $X_i(t)$. In view of the choice of $R$ in \eqref{4}, it thus holds that $m_i(0,R)$=0.
We also remark that as a consequence of Lemma \ref{lem3}, this outer vorticity is always small in the sense that
\begin{equation}\label{108}
m_i(t,\rho) \le \frac{1}{\rho^2} W_2^2\left(\frac{\omega_i(t)}{a_i},\delta_{X_i(t)}\right) \le \frac{e^{CT}}{\rho^2}\, \eps^2.
\end{equation}
In our earlier paper \cite{CeciSeis21}, we made use of this estimate in order to conclude the uniform finiteness of $T$ in the case  $\gamma \le 2(p - 2)/p$. 

In the proof of $T\gtrsim 1$ for larger exponents $\gamma$, we distinguish two cases. 

\medskip

In the \emph{first case}, we assume that 
\begin{equation}\label{103}
m_i(t,2R) < \eps^{\alpha},\quad \mbox{for any }t\in (0,T), i\in\{1,\dots,N\},
\end{equation}
where $\alpha = p\gamma/(p-2)$. Notice that $\alpha\in (2,\infty)$ for exponents $\gamma >2(p - 2)/p$. Therefore, the condition in \eqref{103} is stronger than the one inferred above from the concentration estimate.

We show that under \eqref{103}, the velocity is bounded away from the vortex center.

\begin{lemma}\label{lem4}
Suppose that \eqref{103} holds and let $i\in\{1,\dots,N\}$ be given. Then it holds
	\begin{equation*}
		|u_i(t,x)| \lesssim 1 ,
	\end{equation*} 
for   any $t\in[0,T]$ and any $x\in \R^2$ such that  $ |x-X_i(t)|\ge 4R$.
\end{lemma}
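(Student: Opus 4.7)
The plan is to decompose the Biot--Savart integral
\[
u_i(t,x) = \int_{\R^2} K(x-y)\,\omega_i(t,y)\,dy
\]
according to where the integrated vorticity is located. First, I would split the integration domain into $B_{2R}(X_i(t))$ and its complement. Since $|x-X_i(t)| \ge 4R$, every $y \in B_{2R}(X_i(t))$ satisfies $|x-y| \ge 2R$, hence $|K(x-y)| \lesssim 1$; the bulk contribution is then bounded by $|a_i| \lesssim 1$.

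The delicate part is the outer contribution over $B_{2R}(X_i(t))^c$, which is genuinely singular because $x$ itself lies in this complement and may come arbitrarily close to the support of $\omega_i$ restricted to this region. To handle this, I would perform a second split with an auxiliary radius $r>0$:
\[
\int_{B_{2R}(X_i(t))^c} K(x-y)\,\omega_i(t,y)\,dy = \int_{B_r(x) \cap B_{2R}(X_i(t))^c} + \int_{B_r(x)^c \cap B_{2R}(X_i(t))^c}.
\]
On $B_r(x)^c$, one has $|K(x-y)| \lesssim 1/r$, so the small mass bound $\int_{B_{2R}(X_i(t))^c} |\omega_i(t,y)|\,dy = |a_i|\,m_i(t,2R) \lesssim \eps^{\alpha}$ from assumption \eqref{103} yields a contribution of order $\eps^{\alpha}/r$. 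On $B_r(x)$, I would apply H\"older's inequality with dual exponents $p$ and $p' = p/(p-1)$: since $p>2$ forces $p' < 2$, the kernel is locally in $L^{p'}$ with $\|K\|_{L^{p'}(B_r)} \sim r^{(p-2)/p}$, and combining this with the bound $\|\omega_i\|_{L^p} \lesssim \eps^{-\gamma}$ from \eqref{5} and \eqref{104} gives a contribution of order $r^{(p-2)/p}\,\eps^{-\gamma}$.

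Finally, I would balance these two contributions by choosing $r = \eps^{\alpha}$ with $\alpha = p\gamma/(p-2)$. A direct computation shows that then $\eps^{\alpha}/r = 1$ and $r^{(p-2)/p}\,\eps^{-\gamma} = \eps^{\gamma-\gamma} = 1$, so both contributions are of order one, completing the proof. This also clarifies why the exponent $\alpha = p\gamma/(p-2)$ is used to formulate hypothesis \eqref{103}: it is precisely the value at which the small-mass bound and the $L^p$ bound balance against the local integrability of the Biot--Savart kernel in two dimensions. There is no real obstacle here; the argument is a tight but essentially routine interpolation-type estimate, whose only subtlety is to combine the three scales ($L^p$ growth, outer mass decay, kernel singularity) with the right choice of cutoff.
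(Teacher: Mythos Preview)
Your argument is correct. It differs from the paper's proof in the treatment of the singular contribution from the outer region: the paper uses the symmetric-decreasing rearrangement of $\zeta_i=\chi_{B_{L/2}(X_i)^c}\omega_i$ together with a level-set splitting to derive the interpolation bound
\[
\int_{B_{L/2}(X_i)^c}\frac{|\omega_i(y)|}{|x-y|}\,dy \;\lesssim\; \|\omega_i\|_{L^p}^{\frac{p}{2(p-1)}}\, m_i(t,L/2)^{\frac{p-2}{2(p-1)}},
\]
whereas you obtain the same control by the more elementary device of splitting at a ball $B_r(x)$ and applying H\"older on the inner part. In fact, optimizing your two contributions over $r$ reproduces exactly the paper's interpolation exponent, so the two routes are equivalent in outcome. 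The advantage of your approach is that it avoids rearrangement theory entirely and makes the origin of the exponent $\alpha=p\gamma/(p-2)$ transparent; the advantage of the paper's approach is that it packages the estimate as a clean $L^1$--$L^p$ interpolation inequality, independent of the particular choice of~$\eps$.
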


\begin{proof}
	By dividing by the vortex intensity we may without loss of generality assume   that $\omega_i \ge 0$ and $a_i=1$ in the following. For notational convenience, we drop the time variable.
	
The velocity bound is derived from a suitable estimate of the Biot--Savart kernel. In order to separate the singular from the regular part, 	we set $L=|x-X_i|$ and decompose the velocity into two parts,
	\begin{equation*}
		|u_i(x)| \lesssim \int_{B_{L/2}(X_i)} \frac{1}{|x-y|} \,\omega_i(y)\,dy + \int_{B_{L/2}(X_i)^c} \frac{1}{|x-y|} \,\omega_i(y)\,dy = I_1 + I_2.
	\end{equation*}
	The first term is easy to bound because on the domain of integration, it holds that $|x-y|\ge L/2 \gtrsim 1$, leading to $		I_1 \lesssim  1$.
Concerning the second term, we claim that	
	\begin{equation}\label{6}
		\begin{split}
		I_2 \lesssim  \|\omega_i\|_{L^p}^{\frac{p}{2(p-1)}} m_i(t,L/2)^{\frac{p-2}{2(p-1)}},
		\end{split}
	\end{equation}
	from which the bound $I_2\lesssim 1$  follows via \eqref{5}, \eqref{104} and \eqref{103}.
	
The proof of the estimate \eqref{6} relies on a rearrangement procedure. We start by rewriting $I_2$ as
\[
I_2 = \int_{\R^2} \frac1{|x-y|}\zeta_i(y)\, dy,
\]
where $\zeta_i = \chi_{B_{\frac{L}2}(X_i)^c} \omega_i$. If $\zeta_i^*$ is the symmetric-decreasing rearrangement of $\zeta_i$ given by
\begin{equation}
\label{105}
\zeta_i^*(y) = \int_0^{\infty} \chi_{B_{\ell(s)(0)}}(y)\, ds,\quad \mbox{where }\ell(s) = \frac{|\{\zeta_i>s\}|^{\frac12}}{\pi^{\frac12}},
\end{equation}
the term $I_2$ can be bounded by
\[
I_2 \le \int_{\R^2} \frac1{|y|} \zeta_i^*(y)\, dy,
\]
see, e.g., Theorem 3.4 in \cite{LiebLoss97}. Using the explicit form of the rearrangement in \eqref{105} and Fubini's theorem, we can rewrite the latter as
\[
I_2 \le \int_0^{\infty} \int_{B_{\ell(s)}(0)} \frac1{|y|}\, dyds = 2\pi \int_0^{\infty} \ell(s)\, ds.
\]
For any $S>0$, this integral can be further estimated as
\begin{align*}
I_2  &\lesssim \int_0^S \ell(s)\, ds + \int_S^{\infty} \ell(s)\, ds \\
&\le m_i(L/2)^{\frac12} \int_0^S \frac1{s^{\frac12}}\, ds  + \|\omega_i\|_{L^p}^{\frac{p}2} \int_S^{\infty} \frac1{s^{\frac{p}2}}\, ds \\
&\lesssim S^{\frac12}  m_i(L/2)^{\frac12} + \frac1{S^{\frac{p-2}2}} \|\omega_i\|_{L^p}^{\frac{p}2}.
\end{align*}
Optimizing in $S$ yields \eqref{6}.
\end{proof}

Starting from  this bound on the velocity there  is an elementary calculation which shows that $T$ must be bounded below. 

\begin{lemma}[\cite{CeciSeis21}]\label{lem6}
Suppose that \eqref{103} holds. If $\delta\gg R$,  then $T\gtrsim 1$.
\end{lemma}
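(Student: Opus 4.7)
The plan is to show that, under \eqref{103}, the Lagrangian trajectories inside each vortex component are driven by a velocity that is bounded by a universal constant for as long as the components stay well separated. Since each initial support lies in $B_R(\bar X_i)$ and $R \ll \delta$, such a bounded velocity cannot spread the supports enough to reduce their mutual separation from $\delta$ to $\delta/2$ before a time $\gtrsim 1$, which is precisely what must be shown.

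First, from \eqref{107} and Lemma \ref{lem1} we infer that $|\dot X_i(t)| \lesssim 1$ uniformly for $t \in [0,T]$, so each vorticity center drifts at bounded speed. Next, combining Lemma \ref{lem4} (which gives $|u_i(t,x)| \lesssim 1$ whenever $|x - X_i(t)| \ge 4R$) with a straightforward extension of Lemma \ref{lem1} from $\Omega_i(t)$ to the larger ball $B_{\delta/4}(X_i(t))$, we obtain the uniform bound $|u(t,x)| \lesssim 1$ for all $x$ with $4R \le |x - X_i(t)| \le \delta/4$ and all $t \in [0,T]$. The extension of Lemma \ref{lem1} is immediate because, since $X_i(t)$ lies in the convex hull of $\Omega_i(t)$ by sign definiteness of $\omega_i$, the ball $B_{\delta/4}(X_i(t))$ remains at distance $\ge \delta/4$ from every other support $\Omega_j(t)$ as long as $t \le T$.

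The core of the argument is a bootstrap on the Lagrangian flow. Consider a trajectory $\phi$ with $\phi(0) \in \Omega_i(0) \subset B_R(\bar X_i)$ and set $r(t) := |\phi(t) - X_i(t)|$. Then $|\dot r(t)| \le |u(t,\phi(t))| + |\dot X_i(t)| \le C$ whenever $r(t) \in [4R, \delta/4]$, while $r(t) < 4R$ needs no dynamical control. A standard continuity argument then yields $r(t) \le 4R + Ct$ for all $t$ for which $4R + Ct \le \delta/4$, that is, up to a time $t_* \gtrsim 1$ (using $\delta \gg R$ and the fact that $\delta$ is independent of $\eps$). Since the solutions are Lagrangian, this gives $\Omega_i(t) \subset \{x : |x - X_i(t)| \le 4R + Ct\}$ on $[0, t_*]$.

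Finally, the triangle inequality together with $|X_i(t) - \bar X_i| \le Ct$ and the initial separation $|\bar X_i - \bar X_j| \ge \delta - 2R$ (which follows from \eqref{40} and \eqref{4}) gives
\[
\dist(\Omega_i(t), \Omega_j(t)) \ge \delta - 10R - 4Ct,
\]
which stays above $\delta/2$ on an interval of length $\gtrsim 1$. By the definition \eqref{t1}, this forces $T \gtrsim 1$. The only technical delicacy lies in the bootstrap step, which must be carried out for trajectories of a flow whose velocity is only continuous near the vortex cores, exploiting the renormalized/Lagrangian structure of the solutions to ensure that the support is genuinely transported along the flow.
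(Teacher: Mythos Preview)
Your argument is essentially correct and follows the natural line that Lemma~\ref{lem4} was set up for; the paper itself omits the proof and simply cites the authors' earlier work \cite{CeciSeis21}.

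One simplification is worth pointing out: the extension of Lemma~\ref{lem1} from $\Omega_i(t)$ to the larger ball $B_{\delta/4}(X_i(t))$ is unnecessary. Since solutions are Lagrangian, any trajectory $\phi$ with $\phi(0)\in\Omega_i(0)$ satisfies $\phi(t)\in\Omega_i(t)$ for all $t$, so the far-field bound $|F_i(t,\phi(t))|\lesssim 1$ follows directly from Lemma~\ref{lem1} as stated. This is fortunate, because your convex-hull justification for the extension is not quite right: the center of mass $X_i(t)$ does lie in the convex hull of $\Omega_i(t)$, but a point of the convex hull can be strictly closer to $\Omega_j(t)$ than every point of $\Omega_i(t)$ (take $\Omega_i$ to be two small blobs far apart and $\Omega_j$ a blob near the midpoint). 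With this simplification the bootstrap $r(t)\le 4R+Ct$ holds on all of $[0,T]$ without the auxiliary constraint $r(t)\le\delta/4$, and the final separation estimate goes through exactly as you wrote.
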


As this estimate was already proved in Lemma 11 of our previous paper, we skip its proof. 

We have completed the argument in the case where \eqref{103} holds true.

\medskip

In the \emph{second case}, we assume that 
\begin{equation}
\label{106}
m_i(t_*,2R) = \eps^{\alpha}\quad\mbox{for some }t_*\in (0,T),i\in \{1,\dots,N\},
\end{equation}
and we choose $t_*$ minimal. Notice that $t_*$ is indeed positive since $m_i(0,R)=0$ by our choice of $R$.

Instead of bounding $T$ from below, it is enough to find a bound for the smaller $t_*$. We obtain this bound from an estimate on  the outer vorticity $m_i(t_*,2R)$, which in turn relies on  an iterative procedure similar one developed by Marchioro and co-authors, see, e.g., \cite{Marchioro98_2, CapriniMarchioro15}. Differently from these works, however, we start from the hypothesis \eqref{4} that the components are initially contained in balls with a radius $R \sim 1$, while the argument in \cite{Marchioro98_2, CapriniMarchioro15} heavily exploits their assumption that the initial vorticity is strongly concentrated in balls of radius $\eps$. As already mentioned earlier, our analysis also extends to their situation and would produce strong concentration results also in the unbounded vorticity case.

Instead of estimating the outer vorticity function $m_i(t,\rho)$ directly, it is mathematically more convenient to study a smooth variant $\mu_i(t,\rho,\delta R)$, which is defined with  the help of a suitable cut-off function. More precisely, we consider a radially symmetric smooth function $\psi = \psi_{\rho,\delta R}$ satsfying
\begin{equation}\label{109}
	\psi (x)=1 \text{ if } |x|\le \rho, \quad \psi (x)=0 \text{ if } |x|>\rho+\delta R,
\end{equation} 
and
\begin{equation}\label{16}
	|\grad \psi| \lesssim \frac{1}{\delta R}, \quad |\grad^2 \psi| \lesssim \frac{1}{(\delta R)^2},
\end{equation}
and then $\mu_i$ is given by
\begin{equation}\label{17}
	\mu_i(t,\rho,\delta R)=\frac{1}{a_i}\int_{\R^2} \left(1-\psi_{\rho,\delta R}(x-X_i(t))\right)\,\omega_i(t,x)\,dx.
\end{equation}
It is readily checked that  $m_i$ and $\mu_i$ satisfy the relation
\begin{equation}\label{8}
	m_i(t,\rho+\delta R ) \le \mu_i(t,\rho,\delta R)\le m_i(t,\rho).
\end{equation}

In a first step, we derive a differential inequality for $\mu_i$. Here, we assume in addition that $T\lesssim 1$ in order to control the exponential factor in \eqref{108}. Below in the proof of Lemma \ref{lem8} we will exploit the differential inequality in order to show that also the opposite bound holds true, $T\gtrsim 1$.

\begin{lemma}\label{lem7}
Suppose that $T\lesssim 1$. Let $i=1,\ldots,N$ and  $\rho\in[R,2R]$ be given. For any $\delta R \in(0,1)$ and $t \in(0,T)$ it holds
	\begin{equation}\label{9}
		\mu_i(t,\rho,\delta R) \le \mu_i(0,\rho,\delta R) + \kappa(\eps,\delta R) \int_{0}^{t} \mu_i(s,\rho-\delta R,\delta R)\,ds,
	\end{equation}
	where
	\begin{equation*}
		\kappa(\eps,\delta R) \lesssim  \frac{\eps^2}{(\delta R)^2} + \frac{1}{\delta R} .
	\end{equation*}
\end{lemma}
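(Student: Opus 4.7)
The plan is to establish a pointwise differential inequality $|d\mu_i/dt|\lesssim\kappa(\eps,\delta R)\,\mu_i(t,\rho-\delta R,\delta R)$ and integrate it in time. Differentiating \eqref{17}, using the transport equation \eqref{14} together with $\grad\cdot u = 0$, an integration by parts gives
\begin{equation*}
\frac{d}{dt}\mu_i(t,\rho,\delta R) \;=\; -\frac{1}{a_i}\int_{\R^2}\grad\psi(x-X_i)\cdot\bigl(u(x) - \tfrac{d}{dt}X_i\bigr)\omega_i(x)\,dx.
\end{equation*}
Decomposing $u = u_i + F_i$ as in \eqref{102} and exploiting the antisymmetry of $K$, which kills the self-interaction contribution to $\tfrac{d}{dt}X_i$ (so that $\tfrac{d}{dt}X_i = (1/a_i)\int F_i\omega_i\,dx$, as in \eqref{107}), I split $d\mu_i/dt$ into a self-interaction term $\mathrm{SI} = (1/a_i)\int\grad\psi(x-X_i)\cdot u_i(x)\omega_i(x)\,dx$ and a far-field term $\mathrm{FF} = (1/a_i)\int\grad\psi(x-X_i)\cdot(F_i(x) - \tfrac{d}{dt}X_i)\omega_i(x)\,dx$. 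Both integrands are supported in the annulus $A_i := \{\rho\le |x-X_i|\le \rho + \delta R\}$, and since $A_i\subset\{|x-X_i|\ge\rho\}$ where $1-\psi_{\rho-\delta R,\delta R}\equiv 1$, the bound $\int_{A_i}\omega_i/a_i\le \mu_i(t,\rho-\delta R,\delta R)$ holds.

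For $\mathrm{FF}$, Lemma \ref{lem1} yields $|F_i(x) - F_i(X_i)|\lesssim |x-X_i|\lesssim 1$ on $A_i$, while the same Lipschitz bound combined with Lemma \ref{lem3} (using $T\lesssim 1$) gives $|F_i(X_i) - \tfrac{d}{dt}X_i|\lesssim W_2(\omega_i/a_i,\delta_{X_i})\lesssim\eps$, so $|F_i - \tfrac{d}{dt}X_i|\lesssim 1$ on $A_i$; together with $|\grad\psi|\lesssim 1/\delta R$ this produces $|\mathrm{FF}|\lesssim(1/\delta R)\mu_i(t,\rho-\delta R,\delta R)$, the $1/\delta R$ contribution to $\kappa$. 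For $\mathrm{SI}$, I symmetrize using $K(y-x) = -K(x-y)$ to obtain the kernel $[\grad\psi(x-X_i) - \grad\psi(y-X_i)]\cdot K(x-y)$, which by \eqref{16} is bounded in magnitude by $\min(1/(\delta R)^2,1/(\delta R|x-y|))$ and is supported when at least one of $x,y$ lies in $A_i$. Splitting according to whether $|x-y|\le\delta R$ or $|x-y|>\delta R$, the close region uses the $1/(\delta R)^2$ bound together with the observation that $|y-X_i|\ge\rho-\delta R\gtrsim 1$ whenever $x\in A_i$ and $|y-x|\le\delta R$, so $\int_{|y-x|\le\delta R}\omega_i(y)\,dy\le a_im_i(t,\rho-\delta R)\lesssim a_i\eps^2$ by \eqref{108}, yielding $\lesssim(\eps^2/(\delta R)^2)\mu_i(t,\rho-\delta R,\delta R)$ and producing the $\eps^2/(\delta R)^2$ term in $\kappa$. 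The far region uses the $1/(\delta R|x-y|)$ bound and reduces to the inner integral $\int_{|y-x|>\delta R}\omega_i(y)/|x-y|\,dy$ for $x\in A_i$; decomposing $\omega_i$ according to whether $|y-X_i|\le\rho/2$ or $>\rho/2$, the inner piece contributes $\lesssim a_i/\rho\lesssim 1$ because $|x-y|\ge\rho/2$, while the outer piece (of $L^1$ mass $\lesssim a_i\eps^2$ by Lemma \ref{lem3}) is bounded by $\lesssim 1$ via the rearrangement argument of Lemma \ref{lem4}.

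Collecting the bounds delivers $|d\mu_i/dt|\lesssim\kappa(\eps,\delta R)\mu_i(t,\rho-\delta R,\delta R)$ with $\kappa\lesssim\eps^2/(\delta R)^2+1/\delta R$, and integrating from $0$ to $t$ produces \eqref{9}. The main obstacle lies in the outer-piece estimate for the far region: the rearrangement bound of Lemma \ref{lem4} produces a factor $\|\omega_i\|_{L^p}^{p/(2p-2)}(a_i\eps^2)^{(p-2)/(2p-2)}\lesssim\eps^{(2(p-2)-\gamma p)/(2p-2)}$, which is uniformly $\lesssim 1$ only for $\gamma\le 2(p-2)/p$, the regime of \cite{CeciSeis21}; for the broader range of $\gamma$ newly admitted here, one must instead invoke the sharper tail hypothesis \eqref{103}, which is precisely available on the minimal interval $(0,t_*)$ on which this differential inequality is ultimately applied inside Lemma \ref{lem8}.
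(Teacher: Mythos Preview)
Your overall architecture --- differentiate $\mu_i$, split into a self-interaction term and a far-field term, and symmetrize the former --- matches the paper. The far-field estimate is fine. The problem is the ``outer piece of the far region'' in your self-interaction estimate, and your proposed rescue via \eqref{103} does not work.

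The hypothesis \eqref{103} controls $m_i(t,2R)$, whereas what you need to bound is $m_i(t,\rho/2)$ with $\rho\in[R,2R]$, hence $\rho/2\le R<2R$. Since $m_i(t,\cdot)$ is nonincreasing, $m_i(t,\rho/2)\ge m_i(t,2R)$, so \eqref{103} gives no improvement over the Wasserstein bound $m_i(t,\rho/2)\lesssim\eps^2$ from \eqref{108}. Plugging only $\eps^2$ into the rearrangement estimate yields exactly the obstruction you identify, and the lemma as stated (whose only hypothesis is $T\lesssim1$) would remain unproved for $\gamma>2(p-2)/p$.

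The fix is much simpler and requires no rearrangement at all. On your outer piece of the far region you have $x\in A_i$, $|y-X_i|>\rho/2$, and $|x-y|>\delta R$. You chose the $1/(\delta R|x-y|)$ branch of your $\min$ bound; choose the other branch $1/(\delta R)^2$ instead (the Lipschitz bound on $\nabla\psi$ cancels the $1/|x-y|$ of $K$). Then the double integral is bounded by
\[
\frac{1}{(\delta R)^2}\,m_i(t,\rho/2)\,m_i(t,\rho)\;\lesssim\;\frac{\eps^2}{(\delta R)^2}\,m_i(t,\rho),
\]
using only \eqref{108}. Equivalently, in the far region one has $1/|x-y|\le 1/\delta R$, which already suffices. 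This is precisely the paper's treatment of its region $C$: rather than splitting by $|x-y|$, the paper splits the symmetrized integrand according to whether the variable not in the annulus lies inside or outside $B_{\rho/2}(X_i)$; in the latter case both variables carry small mass and the bounded kernel $|\nabla\psi(x)-\nabla\psi(y)||K(x-y)|\lesssim(\delta R)^{-2}$ does the job.
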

\begin{proof}
	As in the previous proofs, we assume without loss of generality that $\omega_i\ge 0$ and $a_i=1$. 
	Using the fact that $\omega_i$ solves the transport equation \eqref{14}, and using the explicit formula \eqref{107} for the velocity of the centers, we compute via an integration by parts that
	\begin{equation*}
		\begin{split}
		\frac{d}{dt}\mu_i(t,\rho,\delta R) &= \frac{d}{dt}X_i \cdot \int \nabla\psi(x-X_i)\,\omega_i(x)\,dx  - \int \psi(x- X_i) \partial_t \omega_i\, dx\\
		&= -\iint K(x-y)\cdot \nabla\psi(x-X_i)\, \omega_i(y)\,\omega_i(x)\,dy\,dx \\
		&\quad + \iint \left(F_i(y)-F_i(x)\right)\cdot\nabla\psi(x-X_i)\,\omega_i(y)\,\omega_i(x)\,dy\,dx \\
		&= I_1 + I_2.
		\end{split}
	\end{equation*}
	To simplify the notation slightly, upon a shift in the spatial variables, we will assume that $X_i=0$ in what follows.
	
	We start with an estimate of $I_1$. 
	Because of the symmetry property $K(z)=-K(-z)$, using Fubini we may write
	\begin{equation*}
		I_1 = \frac12 \iint\left(\nabla\psi(y)-\nabla\psi(x)\right)\cdot K(x-y)\,\omega_i(y)\,\omega_i(x)\,dx\,dy.
	\end{equation*}
In view of the definition of the  cut-off function,  the integrand is nonzero only if  $x$ or $y$ belong to the annulus $B_{\rho+\delta R}\setminus B_{\rho}$. We split the  domain $ \R^2\times (B_{\rho+\delta R}\setminus B_{\rho}) \cup (B_{\rho+\delta R}\setminus B_{\rho})\times \R^2$ which contains the support into the following parts,
	\begin{align*}
	A = \left(B_{\rho+\delta R}\setminus B_{\rho}\right) \times B_{\frac{\rho}2},\quad
	B = B_{\frac{\rho}2} \times \left(B_{\rho+\delta R}\setminus B_{\rho}\right),\\
	C  = ((B_{\rho+\delta R}\setminus B_{\rho})\times B_{\frac{\rho}2}^c)\cup (B_{\frac{\rho}2}^c\times (B_{\rho+\delta R}\setminus B_{\rho})),
	\end{align*}
and we denote the respective contributions of $I_1$ by $I_1^A$, $I_1^B$, and $I_1^C$. 

The contributions due to $A$ and $B$ are controlled in a very similar way. For instance, 	 on the set $A$ we know that $\nabla\psi(y)=0$ and that $|x-y|\ge \rho/2$. In view of the scaling of the gradient of the cut-off function in \eqref{16}, it thus follows that
	\begin{equation*}
		\begin{split}
		|I_1^A| \lesssim \frac{1}{\rho}\, \frac{1}{\delta R} \, m_i(t,\rho) \lesssim \frac{1}{\delta R} \, m_i(t,\rho),
		\end{split}
	\end{equation*}
	by the assumption that $\rho \ge R \sim 1$.

We concentrate now on the remaining set $C$.  Here, we know that both $|x| \ge \rho/2$ and $|y| \ge \rho/2$.  For the integrand to be nonzero, we need at least one of $|x|$ or $|y|$ to be larger than $\rho$. Using the Lipschitz condition on $\nabla\psi$ guaranteed by \eqref{16}, the scaling of the Biot--Savart kernel, the bound on the outer vorticity which follows from the concentration estimate \eqref{108}, and recalling that $\rho \ge R \sim 1$, we thus  see that 
	\begin{align*}
|I_1^C| &\lesssim	\frac{1}{(\delta R)^2} \iint_C |x-y||K(x-y)| \omega_i(x)\omega_i(y)\, dxdy \\
&\lesssim \frac{1}{(\delta R)^2} m_i\left(t,\frac{\rho}{2}\right)\,m_i(t,\rho)  \\
& \lesssim \frac{\eps^2}{(\delta R)^2}\,m_i(t,\rho).
	\end{align*}
 
 By combining the previous bounds, we conclude that
 	\begin{equation*}
		|I_1| \lesssim \left( \frac{\eps^2}{(\delta R)^2} + \frac{1}{\delta R} \right) m_i(t,\rho).
	\end{equation*}

	Now we turn to the estimate of  $I_2$. Using the Lipschitz property of the far field $F_i$ on $\Omega_i$ guaranteed by Lemma \ref{lem1} and the properties of the cut-off function in \eqref{109} and    \eqref{16}, we obtain
	\[
			|I_2|\lesssim \frac{1}{\delta R} \int_{B_{\rho+\delta R} \setminus B_{\rho}} \int |x-y| \,\omega_i(y)\,dy\,\omega_i(x)\,dx.
\]
We now use 	the triangle and Jensen's inequalities and Lemma \ref{lem3} to infer
	\begin{equation*}
		\begin{split}
		|I_2|
		&\lesssim \frac{1}{\delta R} \int_{B_{\rho+\delta R} \setminus B_{\rho}} |x|\,\omega_i(x)\,dx + \frac{1}{\delta R} m_i(t,\rho)  \int|y|\,\omega_i(y)\,dy  \\
		&\lesssim \frac{\rho+\delta R}{\delta R} m_i(t,\rho) + \frac{\eps}{\delta R}  m_i(t,\rho) \\
		&\lesssim \frac{1}{\delta R} \, m_i(t,\rho).
		\end{split}
	\end{equation*}

Combining the estimates for $I_1$ and $I_2$, integrating in time and using \eqref{8} yields the thesis.
\end{proof}

The estimate in \eqref{9} is the basis  for an iteration procedure, which finally yields the desired estimate on $T$ via $t_*$.

\begin{lemma}\label{lem8}
	Suppose that \eqref{106} holds.   If $\eps\ll1$, then $T\gtrsim 1$.
\end{lemma}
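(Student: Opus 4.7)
The plan is to iterate the differential inequality of Lemma \ref{lem7} to derive an upper bound on $m_i(t,2R)$ of the form $(Ct)^n$, where $n$ is a large integer growing like $\eps^{-\beta}$. Combined with the hypothesis \eqref{106}, this will force $t_*$ to be bounded below uniformly in $\eps$. Fix $i$ and a step size $\ell = \delta R$ of the form $\ell = \eps^{\beta}$ for some $\beta \in (0,2)$, for concreteness $\beta = 1$. Set $n = \lfloor R/\ell \rfloor \sim \eps^{-\beta}$ and introduce the radii $\rho_k = 2R - (k+1)\ell$ for $k = 0, 1, \ldots, n$. One checks that $R \le \rho_k \le 2R$ for every $k \le n-1$, so that Lemma \ref{lem7} is applicable with this radius, and moreover the support assumption \eqref{4} guarantees $\mu_i(0, \rho_k, \ell) = 0$ for every such $k$, since the cut-off $\psi_{\rho_k, \ell}$ equals one on $\spt \bar\omega_i$.

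Iterating Lemma \ref{lem7} $n$ times, with each application reducing the radius by $\ell$, then yields
\begin{equation*}
\mu_i(t, \rho_0, \ell) \le \kappa^n \int_0^t \int_0^{s_1} \cdots \int_0^{s_{n-1}} \mu_i(s_n, \rho_n, \ell)\, ds_n \cdots ds_1 \le \frac{(\kappa t)^n}{n!},
\end{equation*}
where the last step uses the trivial bound $\mu_i \le 1$. Since $\rho_0 + \ell = 2R$, the relation \eqref{8} gives $m_i(t, 2R) \le \mu_i(t, \rho_0, \ell)$. Evaluating at $t = t_*$, invoking \eqref{106}, and using Stirling's lower bound $n! \ge (n/e)^n$, we arrive at
\begin{equation*}
\eps^\alpha \le \left(\frac{\kappa e t_*}{n}\right)^n.
\end{equation*}
Lemma \ref{lem7} supplies $\kappa \lesssim \ell^{-1} \sim \eps^{-\beta}$ for $\beta \le 2$, so $\kappa/n \le C$ for a universal constant $C$, whence $t_* \ge \eps^{\alpha/n}/C$. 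Because $\alpha/n \sim \eps^\beta \to 0$, one has $(\alpha/n) \log(1/\eps) \sim \eps^\beta \log(1/\eps) \to 0$ as $\eps \to 0$, hence $\eps^{\alpha/n} \to 1$, and thus $t_* \ge 1/(2C) \gtrsim 1$ once $\eps$ is sufficiently small. This yields $T \ge t_* \gtrsim 1$.

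The main technical point is the calibration of the step size $\ell$: Lemma \ref{lem7} gives $\kappa \lesssim \ell^{-1}$ as long as $\ell \gtrsim \eps^2$, while the number of admissible iterations is $n \sim R/\ell$, so the ratio $\kappa/n$ is automatically bounded independently of $\eps$. The crucial feature is that $n$ grows polynomially in $1/\eps$, which ensures that the exponent $\alpha/n$ decays fast enough for $\eps^{\alpha/n}$ to tend to $1$; this is exactly the gain over the soft concentration bound \eqref{108} that previously only covered the range $\gamma \le 2(p-2)/p$. A minor caveat is that Lemma \ref{lem7} itself presupposes $T \lesssim 1$; if instead $T \gtrsim 1$ already, there is nothing to prove.
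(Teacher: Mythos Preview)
Your argument is correct and follows the same strategy as the paper: iterate Lemma~\ref{lem7} from radius $2R$ down to $R$, use \eqref{4} to kill the initial-value terms, then combine with \eqref{106} and Stirling to force $t_*\gtrsim 1$. The only difference is parametric: the paper takes step size $R/M$ with $M$ of order $\log(1/\eps)$ and bounds the final integrand via the concentration estimate \eqref{108} by $\eps^2$, whereas you take the much smaller step $\ell=\eps$ (hence $n\sim\eps^{-1}$ iterations) and get away with the trivial bound $\mu_i\le 1$, which spares you the appeal to \eqref{108} at the cost of many more iterations.
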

\begin{proof} We argue by conradiction and suppose that for any $k,\ell\in\N$ there exists an $\eps\le 1/k$ such that $T \le 1/\ell$. In particular, it then holds that $t_*\le 1/\ell$.

Our proof is based on an iteration of estimate \eqref{9}, in which we choose $\delta R = R/M$ for some integer $M$ that we fix later. To simplify the notation in the following, we write $\mu_i(t,\rho) = \mu_i(t,\rho,R/M)$ in the following estimate.

We start by observing that  an iteration over  \eqref{9} yields
\begin{align*}
\MoveEqLeft \mu_i(t,(2-1/M)R)  \le C \left(\eps^2 M^2 +M\right) \int_0^t \mu_i(s,\left(2-2/M\right)R)\, ds\\
&\le C^{M-1} \left(\eps^2 M^2 + M\right)^{M-1} \int_0^t\int_0^{t_1}\dots\int_0^{t_{M-2}} \mu_i(t_{M-1},R)\, dt_{M-1}\dots dt_2dt_1,
\end{align*}
because the initial value terms drop out thanks to \eqref{4}. Replacing  $\mu_i$ by the outer vorticities $m_i$ with the help of \eqref{8}, and using the concentration estimate in the form of \eqref{108}, we thus obtain
\[
m_i(t,2R) \le C^{M-1} \left(\eps^2 M^2 + M\right)^{M-1}  \frac{t^{M-1}}{(M-1)!}\eps^2,
\]
upon choosing  $C$ a bit larger if necessary. For $t_*$ given by \eqref{106} and by the elementary  Stirling-type formula $n^n  < e^n n!$, the latter yields
\[
\eps^{\frac{\alpha-2}{M-1}} \lesssim \frac{\eps^2 M^2 +M}{M-1} t_*,
\]
or, equivalently,
\[
\frac{\eps^{\frac{\alpha-2}{M-1}}}{\eps^2 M+1} \lesssim t_*,
\]
for $M$ large enough. Choosing now
\[
1+ \frac{\alpha-2}{\log 2} \log\frac1{\eps}\le M \le \frac1{\eps^2}
\]
gives on the one hand that $\eps^2 M+1\le 2$,
and on the other hand it holds
$\eps^{\frac{\alpha-2}{M-1}} \ge \frac12$. Noting that such an $M$ exists provided that $\eps$ is small enough (or, equivalently, $k$ large enough, both dependent on $\gamma$ and $p$ via $\alpha$) gives that   $t_*\ge c$ for some constant $c>0$. This contradicts the upper bound $t_* \le 1/\ell$ if $\ell >1/c$. 
\end{proof}

\section*{Acknowledgments}
This work is funded by the Deutsche Forschungsgemeinschaft (DFG, German Research Foundation) under Germany's Excellence Strategy EXC 2044--390685587, Mathematics Münster: Dynamics Geometry Structure.

\bibliography{euler}
\bibliographystyle{acm}

\end{document}